\begin{document}

\numberwithin{equation}{section}
\newtheorem{teo}{Theorem}
\newtheorem{lemma}{Lemma}
\newtheorem{defi}{Definition}
\newtheorem{coro}{Corollary}
\newtheorem{prop}{Proposition}
\numberwithin{lemma}{section}
\numberwithin{prop}{section}
\numberwithin{teo}{section}
\numberwithin{defi}{section}
\numberwithin{coro}{section}
\numberwithin{figure}{section}
\title{Curvature Motion in Time-dependent Minkowski Planes}
\author{Vitor Balestro}
 
\address{ Instituto de Matem\'{a}tica -- UFF --
Niter\'{o}i -- Brazil}
\email{vitorbalestro@id.uff.br}

\begin{abstract}
In this paper we study a flow by minkowskian curvature where we have a different Minkowski plane at each time. We derive some evolution formulas, present sufficient hypotesis for the short time existence and convexity of solutions and study the motion considering a particular type of families of Minkowski norms. Also, as a corollary we establish a result about a certain family of nonlinear parabolic PDE's.
\end{abstract}

\subjclass{52A10, 52A21, 52A40, 53A35, 53C44}
\keywords{curvature motion, minkowski plane, isoperimetric inequality}
\maketitle
\section{Introduction}
The idea behind the Minkowski curvature flow treated in \cite{mink} is to consider the plane $\mathbb{R}^2$ with a different norm and study the motion of curves evolving by the minkowskian curvature in this new context. But, since we are working with (almost) arbitrary norms in $\mathbb{R}^2$ is quite natural to ask what happens if the norms change along the motion. There is a (maybe naive) physical application for this: one can imagine a motion in an ambient where the "resistence" varies with time. We will do our generalization as follows: consider $\mathbb{R}^2\times [0,T)$ "sliced" in the following way: each plane $\mathbb{R}^2\times\{t\}$ is identified with the Minkowski plane $\mathbb{R}^2$ endowed with a norm given by a $\mathcal{P}_t$-unit ball with boundary parameterized as usual by $p_t(\theta)$. The idea is to consider each curve of the flow lying in one of these planes and evolving according to its geometry. We could also think about this as a motion in a 2-dimensional fibration over an interval. In this paper we study some results concerning the existence and convergence of the flow under certain conditions to the family of norms. In section two we define the flow, derive some evolution formulas and give some conditions on the family of norms that guarantee desired properties for the flow. In section 3 we study a particular case for what we can ensure existence of the solution until the (usual) area converges to zero. To finish the paper we give, using a geometric argument, a estimate for a uniform blow up time for the solutions of a family of parabolic PDE's. \\

\section{The time-dependent minkowskian curvature flow}
Let $a:S^1\times[0,T)\rightarrow \mathbb{R}$ a strictly positive smooth function which is $\pi$-periodic on $\theta$ for every fixed $t$. For a function $f(\theta,t)$ we will denote the spatial derivatives by $f'$ and the time derivative by $\dot{f}$. Suppose that we have $a(\theta,t)+a''(\theta,t) > 0$ everywhere. We can define a family of $\mathcal{P}$-unit balls on $\mathbb{R}^2$ defining $\mathcal{P}_t$ to be the region enclosed by the curve: \\
\[p_t(\theta) = p(\theta,t) = a(\theta,t)e_r + a'(\theta,t)e_\theta, \] \\
where $e_r = (\cos\theta, \sin\theta)$ and $e_\theta = (-\sin\theta,\cos\theta)$, as usual. For each $t$ we have also the dual ball $\mathcal{Q}_t$ to $\mathcal{P}_t$. For every $t$ the following holds: \\
\[ q(\theta,t) = \frac{p'(\theta,t)}{[p(\theta,t),p'(\theta,t)]}, \ \ \mathrm{and} \] \\
\[ p(\theta,t) = - \frac{q'(\theta,t)}{[q(\theta,t),q'(\theta,t)]} \] \\

We define the time-dependent minkowskian curvature flow (that, for sake of convenience, will be called "generalized flow", in constrast with the "usual flow" studied in \cite{mink}) associated to the family of norms given by $\mathcal{P}_t$ as an application $F:S^1\times[0,T_0)\rightarrow\mathbb{R}^2$, $T_0 \leq T$, which satisfies \\
\[ \frac{\partial F}{\partial u}(u,t) = v(u,t).q(\theta(u,t),t); \ \ \mathrm{and} \] \\
\[ \frac{\partial F}{\partial t}(u,t) = -k(u,t).p(\theta(u,t),t), \]\\
where $k(u,t)$ is the minkowskian curvature of the curve $u \mapsto F_t(u)$ calculated with respect to the $\mathcal{P}_t$ norm, and $\theta(u,t) + \pi/2$ is the angle between $\partial F/\partial u$ and the $x$-axis. Thus, we can write: \\
\[ \frac{\partial F}{\partial\theta}(u,t) = \lambda(u,t).q(\theta(u,t),t) \] \\
and then, the curvature is given by \\
\[ k(u,t) = \frac{[p(\theta(u,t),t),p'(\theta(u,t),t)]}{\lambda(u,t)} \] \\
Notice that we have $\lambda d\theta = vdu$. We derive now some evolution formulas. \\

\begin{lemma} We have, for every $(u,t)\in S^1\times[0,T)$: \\
\[\displaystyle\frac{\partial v}{\partial t} = -k^2v + \frac{\partial\log(a)}{\partial t}v\] \\

\end{lemma}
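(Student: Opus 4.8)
The plan is to exploit the compatibility condition $\partial_t\partial_u F = \partial_u\partial_t F$ and to read off the evolution of $v$ from the components in the moving frame $\{e_r,e_\theta\}$. The first useful observation is that the explicit description $p = a\,e_r + a'\,e_\theta$ gives $p' = (a+a'')e_\theta$ and $[p,p'] = a(a+a'')$, hence
\[
q(\theta,t) = \frac{p'(\theta,t)}{[p(\theta,t),p'(\theta,t)]} = \frac{e_\theta(\theta)}{a(\theta,t)},
\]
so that $\partial F/\partial u = (v/a)\,e_\theta$ and $\partial F/\partial t = -k(a\,e_r + a'\,e_\theta)$. Throughout, for the functions of $(u,t)$ (namely $v$, $k$, $\theta$) a prime denotes $\partial/\partial u$ and a dot $\partial/\partial t$, while for $a$ a prime denotes $\partial/\partial\theta$ and a dot $\partial/\partial t$; one must keep in mind that $a$, $a'$, $e_r$, $e_\theta$ all appear composed with $\theta=\theta(u,t)$.

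Next I would differentiate. Using $e_r' = e_\theta$ and $e_\theta' = -e_r$ one obtains
\[
\partial_t\partial_u F = -\frac{v\dot\theta}{a}\,e_r + \Bigl(\frac{\dot v}{a} - \frac{v(a'\dot\theta+\dot a)}{a^2}\Bigr)e_\theta,
\]
\[
\partial_u\partial_t F = -k'a\,e_r - \bigl(k'a' + k\theta'(a+a'')\bigr)e_\theta .
\]
Comparing the $e_r$-components gives the auxiliary identity $\dot\theta = k'a^2/v$, and comparing the $e_\theta$-components gives, after multiplication by $a$,
\[
\dot v - \frac{v(a'\dot\theta+\dot a)}{a} = -k'a'a - ak\theta'(a+a'') .
\]

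Finally I would substitute. Plugging $\dot\theta = k'a^2/v$ into the left-hand side produces the term $a\,a'k'$, which cancels the $-k'a'a$ on the right. For the remaining term I use the curvature identity $k = [p,p']/\lambda = a(a+a'')/\lambda$ together with $\lambda\,d\theta = v\,du$, i.e. $\theta' = v/\lambda$; these give $a+a'' = k\lambda/a$ and hence $ak\theta'(a+a'') = ak(v/\lambda)(k\lambda/a) = k^2v$. What survives is exactly $\dot v = -k^2v + (\dot a/a)v$, and since $\dot a/a = \partial\log(a)/\partial t$ this is the asserted formula.

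The only genuine obstacle is the bookkeeping: every occurrence of $a$, $a'$, $e_r$, $e_\theta$ conceals the dependence on $\theta(u,t)$, so the chain rule has to be applied consistently, and one must recognize the collapse of $k\theta'(a+a'')$ through the two structural relations above. As a consistency check one may instead differentiate $|\partial F/\partial u|^2_{\mathrm{eucl}} = (v/a)^2$ in $t$, using $\langle e_r,e_\theta\rangle = 0$ and $\langle\partial_t\partial_uF,\partial_uF\rangle = \langle\partial_u\partial_tF,\partial_uF\rangle$; this reproduces the $e_\theta$-equation directly, while $\dot\theta$ can be recovered from $\dot\theta = [\partial_uF,\partial_u\partial_tF]/|\partial_uF|^2$.
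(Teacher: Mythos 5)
Your argument is correct, and it rests on the same key idea as the paper's proof, namely the compatibility condition $\partial_t\partial_u F=\partial_u\partial_t F$; the difference is the frame in which the components are read off. The paper decomposes both mixed derivatives in the Minkowski basis $\{p,q\}$ and extracts only the $q$-coefficient, using the structural facts $\partial q/\partial t=-(\partial\log a/\partial t)\,q$ and $\partial q/\partial\theta\parallel p$; this makes the $e_r$-direction information (equivalently, the evolution of $\theta$) irrelevant, so the computation is shorter and never needs $\dot\theta$. You instead work in the Euclidean frame $\{e_r,e_\theta\}$ after observing $q=e_\theta/a$ (which is correct: $p'=(a+a'')e_\theta$ and $[p,p']=a(a+a'')$), and you must then use the $e_r$-component to solve for $\dot\theta=k'a^2/v$ and substitute it back into the $e_\theta$-component before the cancellations $a\,a'k'$ and $ak\theta'(a+a'')=k^2v$ appear. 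Your route is more computational but buys the extra evolution formula $\partial\theta/\partial t=a^2k'/v$ for free, which is a genuinely useful by-product (it is the analogue of the standard $\theta_t=k_s$ identity in curve-shortening); the paper's route is cleaner precisely because the adapted basis $\{p,q\}$ hides that information where it is not needed. All the individual steps you perform check out, including the chain-rule bookkeeping through $\theta(u,t)$ and the use of $k=[p,p']/\lambda$ with $\theta'=v/\lambda$.
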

\begin{proof}
First, we compute \\
\[ \frac{\partial}{\partial t}\left(\frac{\partial F}{\partial u}\right) = \frac{\partial v}{\partial t}q + v\left(\frac{\partial q}{\partial\theta}\frac{\partial\theta}{\partial t} + \frac{\partial q}{\partial t}\right) \] \\ \\
Is easy to check that $\displaystyle\frac{\partial q}{\partial t} = -\frac{\partial\log(a)}{\partial t}q$. Since $\partial q/\partial\theta$ points on the $p$ direction, writing  $\displaystyle\frac{\partial}{\partial t}\left(\frac{\partial F}{\partial u}\right)$ in the basis $\{p,q\}$ the coefficient of $q$ is $\displaystyle\frac{\partial v}{\partial t}-\frac{\partial\log(a)}{\partial t}v$. Let us now compute \\
\[ \frac{\partial}{\partial u}\left(\frac{\partial F}{\partial t}\right) = -\frac{\partial k}{\partial u}p - k\frac{\partial p}{\partial\theta}\frac{\partial\theta}{\partial u} =  -\frac{\partial k}{\partial u}p -k^2vq \] \\

And then, since $u$ and $t$ are independent parameters we have the desired equality. \\
\end{proof}
Notice that we can still work with a $\mathcal{Q}_t$-arclength parameter $s_t$, but only for a fixed time. As a corollary of the previous lemma we have an evolution formula to the $\mathcal{Q}_t$-arclength of the curves as follows: \\
\[ \frac{\partial L_{\mathcal{Q}_{t}}}{\partial t} = \int_0^{2\pi}\frac{\partial v}{\partial t}\ du = -\int_0^{2\pi}k^2v \ du + \int_0^{2\pi}\frac{\partial\log(a)}{\partial t}v \ du, \] \\
\begin{lemma} \label{qlenght} If there exists $M \in \mathbb{R}$ such that $\dot{a}(\theta,t) \leq Ma(\theta,t)$ in $S^1\times[0,T)$, then the $\mathcal{Q}_t$-lenght of the curves is bounded along the motion.\\
\end{lemma}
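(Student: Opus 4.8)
The plan is to obtain the conclusion directly from the evolution formula for $L_{\mathcal{Q}_t}$ displayed immediately before the statement, combined with a Gronwall-type estimate. First I would recall that
\[ \frac{\partial L_{\mathcal{Q}_{t}}}{\partial t} = -\int_0^{2\pi}k^2 v \ du + \int_0^{2\pi}\frac{\partial\log(a)}{\partial t} v \ du, \]
and observe that the first integral is nonpositive, since $k^2 \geq 0$ and $v > 0$ along the motion (the maps $F_t$ are immersions, so $v$ never vanishes). Hence $\partial_t L_{\mathcal{Q}_t}$ is bounded above by the second integral alone.

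Next I would bring in the hypothesis. Writing $\partial_t\log(a) = \dot{a}/a$ and using that $a > 0$, the assumption $\dot{a}(\theta,t)\leq Ma(\theta,t)$ gives $\partial_t\log(a)\leq M$ pointwise on $S^1\times[0,T)$. Substituting this and using $v>0$ once more,
\[ \frac{\partial L_{\mathcal{Q}_{t}}}{\partial t} \leq M\int_0^{2\pi} v \ du = M\,L_{\mathcal{Q}_{t}}. \]
Equivalently, $\frac{d}{dt}\bigl(e^{-Mt}L_{\mathcal{Q}_t}\bigr)\leq 0$, so that $e^{-Mt}L_{\mathcal{Q}_t}$ is non-increasing on the interval of existence.

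Integrating this differential inequality from the initial time yields $L_{\mathcal{Q}_t}\leq L_{\mathcal{Q}_0}\,e^{Mt}$ for every $t\in[0,T_0)$; since $T_0\leq T<\infty$, the right-hand side is bounded above by $L_{\mathcal{Q}_0}\,e^{|M|T}$, which is the asserted uniform bound. (When $M\leq 0$ one obtains the sharper statement $L_{\mathcal{Q}_t}\leq L_{\mathcal{Q}_0}$, i.e.\ the minkowskian length is non-increasing.) There is essentially no obstacle in this argument; the only point that deserves an explicit remark is the positivity of $v$, which holds precisely because the flow is defined only as long as $F_t$ remains a regular curve.
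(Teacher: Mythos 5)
Your proposal is correct and follows essentially the same route as the paper: discard the nonpositive term $-\int_0^{2\pi}k^2v\,du$, use the hypothesis to get $\partial_t\log(a)\leq M$, deduce $\partial_t L_{\mathcal{Q}_t}\leq ML_{\mathcal{Q}_t}$, and conclude by Gronwall. Your added remarks on the positivity of $v$ and the use of $e^{|M|T}$ (rather than $e^{MT}$, which fails as a uniform bound when $M<0$) are slightly more careful than the paper's version but do not change the argument.
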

\begin{proof} The hypotesis gives $\displaystyle\frac{\partial\log(a)}{\partial t} \leq M$ in $S^1\times [0,T)$. Then, we have \\
\[ \frac{\partial L_{\mathcal{Q}_t}}{\partial t} \leq \int_0^{2\pi}\frac{\partial\log(a)}{\partial t}v \ du \leq ML_{\mathcal{Q}_t}. \] \\
So, Gronwall's inequality yields $L_{\mathcal{Q}_t} \leq e^{MT}L_{\mathcal{Q}_0}$. \\
\end{proof}
We can also derive an evolution formula for the area $A(t)$ enclosed by the curves: \\
\begin{eqnarray*} \frac{\partial A}{\partial t} = \frac{\partial}{\partial t}\left(\frac{1}{2}\int_0^{2\pi}\left[F(u,t),\frac{\partial F}{\partial u}(u,t)\right]\ du\right) = \\ \\ = \frac{1}{2}\int_0^{2\pi}\left[\frac{\partial F}{\partial t}(u,t),\frac{\partial F}{\partial u}(u,t)\right]\ du + \frac{1}{2}\int_0^{2\pi}\left[F(u,t),\frac{\partial^2 F}{\partial t\partial u}(u,t)\right]\ du = \\ \\ = \int_0^{2\pi}-kv \ du = -\int_0^{L_{\mathcal{Q}_t}}k \ ds_t = -2A(\mathcal{P}_t)\end{eqnarray*} \\
Now, is easy to see that the evolution of the isoperimetric ratio is given by \\
\[ \frac{\partial}{\partial t}\left(\frac{L^2_{\mathcal{Q}_t}}{A(t)}\right) = -\frac{2L_{\mathcal{Q}_t}}{A(t)}\left(\int_0^{L_{\mathcal{Q}_t}}k^2 ds_t - A(\mathcal{P}_t)\frac{L_{\mathcal{Q}_t}}{A(t)}\right) + \frac{2L_{\mathcal{Q}_t}}{A(t)}\int_0^{L_{\mathcal{Q}_t}}\frac{\partial\log(a)}{\partial t} \ ds_t \] \\
Here we observe that the isoperimetric ratio may be an increasing function. Moreover, the evolution of the isoperimetric ratio depends on the choice of the considered family of norms. \\ \\
Let us now turn our attention to the existence of such a generalized flow. For a fixed $t$ we know that a $2\pi$-periodic, positive and $C^1$ function $k:[0,2\pi]\rightarrow \mathbb{R}$ is the $t$-minkowskian curvature of a simple, closed, strictly convex and $C^2$ curve if and only if the equalities \\
\begin{eqnarray} \int_0^{2\pi}\frac{a(\theta,t)+a''(\theta,t)}{k(\theta)}\sin(\theta) \ d\theta =  \int_0^{2\pi}\frac{a(\theta,t)+a''(\theta,t)}{k(\theta)}\cos(\theta) \ d\theta = 0 \end{eqnarray} \\
hold. With this in mind we claim that we don't need too strong hypotesis to ensure short-time existence for the generalized flow. This is justified in the next theorem.

\begin{teo} Consider a function $k:S^1\times [0,T_0)\rightarrow \mathbb{R}$, $T_0 \leq T$, such that $k\in C^{2+\alpha,1+\alpha}(S^1\times[0,T_0-\epsilon])$ for all $\epsilon > 0$, satisfying the evolution equation: \\
\begin{eqnarray} \frac{\partial k}{\partial t} = \frac{a}{a+a''}k^2k'' + \frac{2a'}{a+a''}k^2k' + k^3 + \frac{\partial\log(a+a'')}{\partial t}k \end{eqnarray} \\
with initial condition $k(\theta,0) = \varphi(\theta)$, where $\varphi$ is a strictly positive $C^{1+\alpha}$ function satistying (1). Assume that $t \mapsto a(\theta,t) + a''(\theta,t)$ is nondecreasing for each $\theta \in S^1$. Then, using this function (whose short term existence is guaranteed by the standard theory on parabolic equations) one can build the family of curves on parameter t: \\
\begin{eqnarray*} F(\theta,t) = \left( -\int_0^{\theta}\frac{a(\sigma,t)+a''(\sigma,t)}{k(\sigma,t)}\sin\sigma \ d\sigma - \int_0^ta(0,s)k(0,s) \ ds,\right. \\
\left.\int_0^{\theta}\frac{a(\sigma,t)+a''(\sigma,t)}{k(\sigma,t)}\cos\sigma \ d\sigma - \int_0^ta(0,s)k'(0,s)+a'(0,s)k(0,s) \ ds \right) \\ \end{eqnarray*} 
for which the following holds: \\
\begin{description}
\item[(a)] for each fixed $t$ the map $t \mapsto F(\theta,t)$ is a simple, closed and strictly convex curve parameterized as usual whose $t$-minkowskian curvature is given by $\theta \mapsto k(\theta,t)$ \\

\item[(b)] $\displaystyle\frac{\partial F}{\partial\theta}(\theta,t) = -k(\theta,t).p(\theta,t) - a(\theta,t)^2k'(\theta,t).q(\theta,t)$ \\
\end{description}
\end{teo}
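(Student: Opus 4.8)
\emph{Strategy.} The plan is to read essentially everything off the explicit formula for $F$, using the evolution equation $(2.2)$ only to control how the $\theta$-integrals depend on $t$. First I record the basic identities for the balls: from $p=ae_r+a'e_\theta$ together with $e_r'=e_\theta$ and $e_\theta'=-e_r$ one gets $p'=(a+a'')e_\theta$, $[p,p']=a(a+a'')$ and hence $q=p'/[p,p']=a^{-1}e_\theta$; I will also use repeatedly the elementary identity $ak''+2a'k'+a''k=(ak)''$. Next I check that $k$ stays positive: since $t\mapsto a+a''$ is nondecreasing, $\partial_t\log(a+a'')\ge 0$, so at a spatial minimum of $k$ (where $k'=0$ and $k''\ge 0$) equation $(2.2)$ gives $\dot k\ge k^3+(\partial_t\log(a+a''))k\ge 0$ whenever $k\ge 0$; a standard maximum principle argument then yields $k(\theta,t)\ge\min\varphi>0$ throughout, so the formula for $F$ is well defined and smooth enough.

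\emph{Proof of (a).} Differentiating $F$ in $\theta$ gives $\partial_\theta F=\frac{a+a''}{k}\,e_\theta(\theta)$, which is a strictly positive multiple of $e_\theta(\theta)$ because $a+a''>0$ and $k>0$. Hence $F_t$ is a regular curve whose tangent makes angle $\theta+\pi/2$ with the $x$-axis (so it is parameterized "as usual") and whose tangent angle strictly increases with $\theta$; since this turning is monotone with total amount $2\pi$, $F_t$ bounds a convex body, so $F_t$ is strictly convex and simple. For closedness I must check $F(2\pi,t)=F(0,t)$, i.e. that $(1)$ holds for $k(\cdot,t)$; this is exactly the hypothesis on $\varphi$ at $t=0$. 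Differentiating $\int_0^{2\pi}\frac{a+a''}{k}\sin\sigma\,d\sigma$ in $t$ and substituting $(2.2)$ for $\dot k$, the $\dot a+\dot a''$ terms cancel and the integrand collapses to $-((ak)''+ak)\sin\sigma$, which integrates to $0$ over $S^1$ after two integrations by parts; the $\cos$ integral is identical, so $(1)$ is preserved and $F_t$ closes up. Finally, writing $\partial_\theta F=\frac{a(a+a'')}{k}\,q=\lambda q$ with $\lambda=a(a+a'')/k$ and using $[p,p']=a(a+a'')$, the curvature formula recalled before the theorem gives that the $t$-minkowskian curvature of $F_t$ is $[p,p']/\lambda=k(\theta,t)$.

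\emph{Proof of (b).} Here I differentiate $F$ in $t$. By $(2.2)$ and the identity $ak''+2a'k'+a''k=(ak)''$,
\[
\partial_t\!\left(\frac{a+a''}{k}\right)=\frac{\dot a+\dot a''}{k}-\frac{(a+a'')\dot k}{k^2}=-\big((ak)''+ak\big).
\]
Substituting this into $\partial_t F$ and integrating by parts twice in $\sigma$, the boundary contributions at $\sigma=0$ are precisely $a(0,t)k(0,t)$ and $a(0,t)k'(0,t)+a'(0,t)k(0,t)=(ak)'(0,t)$ — exactly the constants subtracted in the definition of $F$ — so they cancel and one is left with $\partial_t F=-(ak)\,e_r-(ak)'\,e_\theta$. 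Expanding $(ak)'=a'k+ak'$ and comparing with $-kp-a^2k'q=-k(ae_r+a'e_\theta)-ak'e_\theta$ gives $\partial_t F=-kp-a^2k'q$. (The extra term $-a^2k'q$ is tangential, which is why this $F$ realizes the generalized flow only up to reparameterization.)

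\emph{The main difficulty.} The only steps that are not pure bookkeeping are the two time-derivative computations — preservation of $(1)$ in part (a) and the velocity formula in part (b) — and both reduce to the same move: insert $(2.2)$, use $ak''+2a'k'+a''k=(ak)''$, and integrate by parts twice. A minor additional point is the maximum-principle argument for positivity of $k$, which is where the hypothesis that $t\mapsto a+a''$ is nondecreasing is used; without it $k$ could degenerate and $F$ would fail to be defined.
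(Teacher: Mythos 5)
Your proof is correct and follows essentially the same route as the paper, which simply defers the verification of (b) and of the closure conditions to the ``straightforward calculations'' of the usual flow and isolates, exactly as you do, the positivity of $k$ via the maximum principle using $\partial_t\log(a+a'')\ge 0$; your version has the merit of actually carrying out the substitution of $(2.2)$ and the double integration by parts. Note that you have (correctly) read item (b) as a formula for $\partial F/\partial t$ rather than the $\partial F/\partial\theta$ printed in the statement, which is evidently a typo since your part (a) shows $\partial F/\partial\theta=\lambda q$ has no $p$-component.
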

\begin{proof}
The proofs of \textbf{(b)} and of \\
\[\int_0^{2\pi}\frac{a(\sigma,t)+a''(\sigma,t)}{k(\sigma,t)}\sin\sigma \ d\sigma = \int_0^{2\pi}\frac{a(\sigma,t)+a''(\sigma,t)}{k(\sigma,t)}\cos\sigma \ d\sigma = 0 \] \\
for every $t$ are straightfoward calculations just  like in the usual minkowskian curvature flow. Thus, we only need to prove that $k$ is strictly positive in $S^1\times [0,T_0)$. But the hypotesis on $a + a''$ guarantees that we have \\
\[ \frac{\partial\log(a+a'')}{\partial t} \geq 0 \ \ \mathrm{in} \ S^1\times [0,T_0) \] \\
and then we can repeat the proof of the usual case to prove that $k_{\mathrm{MIN}}(t)$ is bounded by below by $k_{\mathrm{MIN}}(0)$. This concludes the proof. \\
\end{proof}
From the evolution formulas we can see that the area $A(t)$ enclosed by the curve at time $t$ is a decreasing function. But the decay ratio depends on $A(\mathcal{P}_t)$, and then we cannot readily say that the area converges to $0$ even for infinite time. Furthermore, we don't even can tell if the $\mathcal{Q}_t$-lengths have an upper bound. But, working on a certain class of $\mathcal{P}_t$ families we can maybe give good answers to these questions. We work with a particular case in the next section. \\
\section{Homothetic family of $\mathcal{P}$-balls}
Consider a Minkowski norm on $\mathbb{R}^2$ given by the set $\mathcal{P} = \mathcal{P}_0$, whose boundary is parameterized by \\
\[ p_0(\theta) = a_0(\theta)e_r + a_0'(\theta)e_{\theta} \] \\
where $a_0$ is a $C^{\infty}$ and $\pi$-periodic function. Let $f:[0,\infty) \rightarrow \mathbb{R}$ be a $C^{\infty}$, positive and nondecreasing function such that $f(0) = 1$. Then, we can define a family of Minkowski norms putting \\
\[ a(\theta,t) = f(t)a_0(\theta) \] \\
and, naturally, $p(\theta,t) = a(\theta,t)e_r + a'(\theta,t)e_{\theta} = f(t)p_0(\theta)$. \\

\noindent\textit{Remark.} Even if this looks like a simple reparametrization at the time we point out that, here, we are calculating the curvatures in a different way at each time. This interpretation considers the flow evolving with respect to the geometry of each space. \\

The hypotesis that $f$ is nondecreasing guarantees short term existence of the generalized flow associated to this family of norms, taking by initial conditon a closed, strictly convex and smooth curve. In this particular case the evolution equation becames \\
\[ \frac{\partial k}{\partial t} = \frac{a_0}{a_0+a_0''}k^2k'' + \frac{2a_0'}{a_0+a_0''}k^2k' + k^3 + \frac{\dot{f}(t)}{f(t)}k, \] \\ We claim that in this case, if the solution continues until the area enclosed by the curves goes to 0, then the area converges to 0 in finite time. In fact, notice first that \\
\[ A(\mathcal{P}_t) = \int_0^{2\pi}[p(\theta,t),p'(\theta,t)] \ d\theta = f(t)^2\int_0^{2\pi}[p_0(\theta),p_0'(\theta)]\ d\theta = f(t)^2A(\mathcal{P}_0) \] \\
Now, the evolution formula for the area guarantees that \\
\[ A(t) = A(0) - 2\int_0^tA(\mathcal{P}_s) \ ds = A(0) - 2\int_0^tf(s)^2A(\mathcal{P}_0)\ ds \leq A(0) - 2tA(\mathcal{P}_0),  \] \\
since $f(0) = 1$ and $f$ is nondecreasing. Then, $A(t)$ converges to zero for some time $T_1 \leq A(0)/A(\mathcal{P}_0)$. \\

Our next claim is that we have an upper bound for the $\mathcal{Q}_t$-length of the curves in finite time. In fact, for finite time we can take an upper bound for $\displaystyle\frac{\dot{f}(t)}{f(t)}$ and use Lemma \ref{qlenght}. This proves that the $\mathcal{Q}_t$-lenght doesn't blow up along the motion. \\

We will now prove that in this case we also have solution until the area converges to 0. The strategy is basically the same that in the usual case treated in \cite{mink}. We consider a solution $k$ of (2) defined on $S^1\times [0,T)$ such that the area enclosed by the associated curves remains bounded away from zero (i.e., we have $T \leq T_1$) and prove that $k$ and its derivatives are bounded in $S^1\times [0,T)$. Finally, we use Ascoli-Arzela's theorem to extend $k$ past $T$. The difference here is that we have to deal with bounds to $f$ and its derivatives. This shouldn't be a problem since we are working with a finite time interval $[0,T) \subseteq [0,T_1]$, and then  compacity and the smoothness of $f$ guarantee the needed bounds. \\

Recall that in the minkowski plane we still can define the median curvature of a curve parameterized by the usual $\theta$ as the supremum of the values $x$ for which we have $k(\theta) > x$ in some interval of length $\pi$. We have the estimate \\
\[ k^* \leq C\frac{L_{\mathcal{Q}}}{A} \] \\
for a constant $C$ given by \\
\[ C = \left(\max_{\theta\in [0,2\pi]}|q(\theta)|\right)^2\max_{\theta\in [0,2\pi]}[p(\theta),p'(\theta)] \] \\
And then, in our generalized case we have \\
\[ k^*(t) \leq C(t)\frac{L_{\mathcal{Q}_t}}{A(t)}, \] \\
with \\
\[C(t) = \left(\max_{\theta\in [0,2\pi]}\left| \frac{1}{a(\theta,t)}\right|\right)^2\max_{\theta \in [0,2\pi]}\left[p(\theta,t),p'(\theta,t)\right] \] \\
The point here is that there is an uniform upper bound for the median curvature along the motion if the area is bounded by below by a constant greater then zero. We already know that we have an upper bound for $L_{\mathcal{Q}_t}$. We also can (in such a natural way!) produce an uniform upper bound for $C(t)$ just rewriting  \\
\begin{eqnarray*} C(t) = \frac{1}{f(t)^2}\left(\max_{\theta\in [0,2\pi]}\frac{1}{a_0(\theta)}\right)^2f(t)^2\max_{\theta \in [0,2\pi]}[p_0(\theta),p_0'(\theta)] = \\ \\ = \left(\max_{\theta\in [0,2\pi]}\frac{1}{a_0(\theta)}\right)^2\max_{\theta \in [0,2\pi]}[p_0(\theta),p_0'(\theta)]  \end{eqnarray*} \\
This is summarized as follows \\
\begin{lemma} If $A(t)$ is bounded away from zero on $[0,T)$, then there is an uniform upper bound for $k^*(t)$ in $[0,T)$. \\
\end{lemma}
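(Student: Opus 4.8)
The plan is to read off the conclusion directly from the pointwise estimate $k^*(t)\le C(t)L_{\mathcal{Q}_t}/A(t)$ established just above the statement, by bounding each of the three factors on the right-hand side separately and uniformly in $t\in[0,T)$. Two of these factors have in fact already been controlled in the preceding discussion; the only work is to assemble the pieces and to note why the control is uniform.

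First I would pin down $C(t)$. Using $a(\theta,t)=f(t)a_0(\theta)$ and the consequent identity $[p(\theta,t),p'(\theta,t)]=f(t)^2[p_0(\theta),p_0'(\theta)]$, the factor $f(t)^{-2}$ coming from $\left(\max_\theta a(\theta,t)^{-1}\right)^2$ cancels the factor $f(t)^2$, so that $C(t)$ equals the constant $C_0:=\left(\max_\theta a_0(\theta)^{-1}\right)^2\max_\theta[p_0(\theta),p_0'(\theta)]$, finite and independent of $t$ — this is precisely the computation displayed before the statement. Next I would handle $L_{\mathcal{Q}_t}$: the hypothesis that $A(t)$ stays away from $0$ forces $T\le T_1\le A(0)/A(\mathcal{P}_0)<\infty$, so $[0,T)$ is contained in the compact interval $[0,T_1]$; since $f$ is smooth and strictly positive there, $\dot f/f$ is bounded, say by $M$, which is exactly the hypothesis $\dot a\le Ma$ of Lemma \ref{qlenght}. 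That lemma then gives $L_{\mathcal{Q}_t}\le e^{MT_1}L_{\mathcal{Q}_0}$ for every $t\in[0,T)$. Finally, writing $\delta>0$ for a lower bound of $A(t)$ on $[0,T)$, combining the three estimates yields $k^*(t)\le C_0 e^{MT_1}L_{\mathcal{Q}_0}/\delta$ for all $t\in[0,T)$, the desired uniform upper bound.

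I do not expect any genuine obstacle here, since all three estimates are already in hand: the only step deserving a word of care is the implication that $A(t)$ bounded away from zero forces $T<\infty$, which is what upgrades the merely local control of $\dot f/f$ (via compactness and smoothness of $f$) to a bound valid on all of $[0,T)$, and hence makes the resulting bound on $k^*(t)$ uniform.
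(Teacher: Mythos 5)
Your argument is correct and matches the paper's own reasoning exactly: the lemma there is stated as a summary of the immediately preceding computation, which consists of the same three ingredients you assemble — the cancellation of $f(t)^2$ making $C(t)$ constant, the bound on $L_{\mathcal{Q}_t}$ from Lemma \ref{qlenght} via a bound on $\dot f/f$ over the finite time interval, and the assumed lower bound on $A(t)$. Your explicit remark that the area hypothesis forces $T\le T_1<\infty$, which is what makes the bound on $\dot f/f$ (and hence everything else) uniform, is a point the paper leaves implicit but is entirely in the same spirit.
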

By consequence we have the following proposition \\
\begin{prop}If $k^*(t)$ is bounded in $[0,T)$, then the function\\
\[ t \mapsto \displaystyle\int_0^{2\pi}(a_0(\theta)+a_0''(\theta))a_0(\theta)\log\left(\frac{k(\theta,t)}{f(t)}\right)\ d\theta \] \\
is also bounded in $[0,T)$. \\
\end{prop}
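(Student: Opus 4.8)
\emph{Plan of proof.} The plan is to follow the scheme of \cite{mink}: differentiate the functional along the flow, bound it below by a maximum principle and above by a Gage--Hamilton-type estimate resting on the bound for $k^*$. Write $g:=(a_0+a_0'')a_0$, a fixed strictly positive smooth $\pi$-periodic weight, with $\int_0^{2\pi}g\,d\theta=2A(\mathcal P_0)$. From the homothetic-case evolution equation $k_t=\frac{a_0}{a_0+a_0''}k^2k''+\frac{2a_0'}{a_0+a_0''}k^2k'+k^3+\frac{\dot f}{f}k$ one computes $\partial_t\log(k/f)=k_t/k-\dot f/f=\frac{a_0}{a_0+a_0''}kk''+\frac{2a_0'}{a_0+a_0''}kk'+k^2$ (the $\dot f/f$ terms cancel). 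Multiplying by $g$, using $g\,\frac{a_0}{a_0+a_0''}=a_0^2$ and $g\,\frac{2a_0'}{a_0+a_0''}=2a_0a_0'$, integrating over $[0,2\pi]$ and integrating by parts (everything is $2\pi$-periodic, so $\int_0^{2\pi}(a_0^2kk''+2a_0a_0'kk')\,d\theta=-\int_0^{2\pi}a_0^2(k')^2\,d\theta$), I would obtain the evolution identity
\[ \frac{d}{dt}\int_0^{2\pi}(a_0+a_0'')a_0\log\!\Big(\frac{k}{f}\Big)\,d\theta=\int_0^{2\pi}\Big((a_0+a_0'')a_0\,k^2-a_0^2(k')^2\Big)\,d\theta. \]

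For the lower bound, set $\psi:=k/f$; it satisfies $\psi_t=f^2\big(\frac{a_0}{a_0+a_0''}\psi^2\psi''+\frac{2a_0'}{a_0+a_0''}\psi^2\psi'+\psi^3\big)$, so at a spatial minimum of $\psi(\cdot,t)$ one has $\psi'=0$, $\psi''\ge 0$, hence $\psi_t\ge f^2\psi^3>0$, so $\min_\theta\psi(\cdot,t)$ is nondecreasing. Since $f(0)=1$ this yields $k/f\ge\min_\theta\varphi=:m_0>0$ on $[0,T)$, whence $\log(k/f)\ge\log m_0$ and the functional is bounded below by $2A(\mathcal P_0)\log m_0$ throughout $[0,T)$.

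The upper bound is the substantive part. By the identity above it suffices to bound $\Phi(t):=\int_0^{2\pi}\big((a_0+a_0'')a_0\,k^2-a_0^2(k')^2\big)\,d\theta$ from above by a constant depending only on $\mathcal P_0$, on $K:=\sup_{[0,T)}k^*(t)<\infty$, and on the bounds already obtained for $L_{\mathcal Q_t}$ and for the area; integrating over the finite interval $[0,T)\subseteq[0,T_1]$ then closes the argument. The estimate for $\Phi$ is of Gage--Hamilton type. Its analytic core is the sharp weighted Wirtinger inequality: for every interval $I$ with $|I|\le\pi$ and every $u\in H^1_0(I)$,
\[ \int_I(a_0+a_0'')a_0\,u^2\,d\theta\le\int_I a_0^2(u')^2\,d\theta; \]
this holds because the functions $\cos\theta/a_0$, $\sin\theta/a_0$ and their linear combinations (equivalently, $\theta\mapsto\langle c,q(\theta)\rangle$ for $c\in\mathbb R^2$) solve $(a_0^2u')'+(a_0+a_0'')a_0\,u=0$, and on each interval of length $\pi$ an appropriate one of them is positive with zeros exactly at the endpoints, hence realizes the first Dirichlet eigenvalue $1$ of the associated Sturm--Liouville problem, and Dirichlet eigenvalues increase under domain restriction. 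The geometric input is that $k^*(t)\le K$ forces every connected component of $\{\theta:k(\theta,t)>2K\}$ to have length $<\pi$; one then splits $\int(a_0+a_0'')a_0\,k^2\,d\theta$ into the part over $\{k\le 2K\}$ (bounded by $8K^2A(\mathcal P_0)$) and the parts over the components of $\{k>2K\}$, on each of which the Wirtinger inequality applied to $w:=(k-2K)_+$ lets $-\int a_0^2(k')^2$ absorb the quadratic term up to controlled cross terms, exactly as in \cite{mink}. Alternatively one may reduce to \cite{mink} outright: under the time change $\tau(t)=\int_0^t f(s)^2\,ds$ --- a smooth bijection of $[0,T)$ onto a bounded interval, since $T\le T_1<\infty$ and $f$ is bounded on $[0,T_1]$ --- the function $\psi=k/f$ solves the usual Minkowski curvature flow equation for the fixed norm $\mathcal P_0$, and $k^*(t)$ is bounded iff the median curvature $\psi^*(\tau)$ of $\psi$ is bounded (as $f$ is bounded above and below on $[0,T_1]$); the functional in the statement then coincides at time $t$ with $\int(a_0+a_0'')a_0\log\psi\,d\theta$ at time $\tau(t)$, to which the corresponding result of \cite{mink} applies.

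The step I expect to be the main obstacle is precisely the Gage--Hamilton-type bound on $\Phi(t)$: the Wirtinger constant above is sharp (equality on intervals of length exactly $\pi$), so the negative term $-\int a_0^2(k')^2$ only barely dominates the part of $\int(a_0+a_0'')a_0\,k^2$ concentrated where $k$ is large, and closing the estimate genuinely uses the median-curvature hypothesis together with convexity and the bounds on $L_{\mathcal Q_t}$ and on the area --- just as in \cite{mink}. The evolution identity and the lower bound are routine.
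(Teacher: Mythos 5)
Your main line is essentially the paper's own proof: the evolution identity you derive is exactly the paper's $\frac{d}{dt}\int(a_0+a_0'')a_0\log(k/f)\,d\theta=\int_0^{2\pi}(a_0k)^2-\bigl((a_0k)'\bigr)^2\,d\theta$ after one integration by parts (since $\int((a_0k)')^2=\int a_0^2(k')^2-a_0a_0''k^2$), and the upper bound is the same Gage--Hamilton median-curvature argument with the weighted Wirtinger inequality whose eigenfunctions are $\langle c,e_r\rangle/a_0$. One point needs correcting, though: your claim that $\Phi(t)$ is bounded \emph{pointwise} by a constant depending on $K$, $L_{\mathcal Q_t}$ and the area is not justified, and is not what the paper proves. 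The cross term produced by the decomposition is (up to constants) $\int_0^{2\pi}a_0(a_0+a_0'')k\,d\theta=\frac{1}{f^2}\int k^2v\,du$, which can be large at isolated times even when $k^*$, $L_{\mathcal Q_t}$ and $A(t)$ are all controlled ($k$ may spike on a short $\theta$-interval). The paper's resolution --- and the step your ``integrating closes the argument'' is silently relying on --- is that this term equals $\frac{1}{f^2}\bigl(-\partial_tL_{\mathcal Q_t}+\frac{\dot f}{f}L_{\mathcal Q_t}\bigr)$, i.e.\ a total time derivative of a bounded quantity plus a bounded remainder, so only its \emph{time integral} is controlled. You should state the bound on $\Phi$ in the form $\Phi(t)\le -C_0k^*\,\partial_tL_{\mathcal Q_t}+C_1$ and integrate, rather than asserting a pointwise constant bound.

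Your alternative route is genuinely different from the paper and worth noting: under the time change $\tau(t)=\int_0^tf(s)^2\,ds$ the function $\psi=k/f$ does satisfy the fixed-norm equation $\psi_\tau=\frac{a_0}{a_0+a_0''}\psi^2\psi''+\frac{2a_0'}{a_0+a_0''}\psi^2\psi'+\psi^3$, the functional in the statement becomes exactly the entropy of the usual flow for $\mathcal P_0$, $\tau$ ranges over a bounded interval, and $\psi^*$ is bounded iff $k^*$ is (as $1\le f\le\sup_{[0,T_1]}f$). This reduces the proposition to the corresponding statement in the fixed-norm paper in one stroke and avoids redoing the estimate; the paper instead carries the $\dot f/f$ terms through the direct computation, which is what forces it to confront the modified identity for $\int k^2v\,du$. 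Your lower bound via the maximum principle for $\min_\theta\psi$ is fine and slightly cleaner than the paper's (which defers the lower bound to the subsequent corollary).
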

\begin{proof} A straightfoward calculation gives us \\
\begin{eqnarray*} \frac{d}{dt}\left(\int_0^{2\pi}(a_0(\theta)+a_0''(\theta))a_0(\theta)\log\left(\frac{k(\theta,t)}{f(t)}\right)\ d\theta\right) = \\ \\ = \int_0^{2\pi}(a_0(\theta)k(\theta,t))^2 - \left((a_0(\theta)k(\theta,t))'\right)^2d\theta \end{eqnarray*} \\
From now on we use basically the same strategy used in the usual case. Fix any $t \in [0,T)$ and let $A = \left\{\theta \in [0,2\pi] \mid k(\theta,t) > k^*(t)\right\}$. We have the estimate on A: \\
\[ \int_0^{2\pi}(a_0k)^2-\left((a_0k)'\right)^2d\theta \leq 2k^*(t)\int_0^{2\pi}a_0(a_0+a_0'')k \ d\theta - k^*(t)^2\int_A(a_0')^2+2a_0a_0''+a_0^2d\theta \] \\
Here we must take some care with the first integral on the right side. First, notice that \\
\begin{eqnarray*} \int_0^{2\pi}a_0(a_0+a_0'')k \ d\theta = \frac{1}{f(t)^2}\int_0^{2\pi}[p(\theta,t),p'(\theta,t)]k\ d\theta = \frac{1}{f(t)^2}\int_0^{2\pi}k^2\lambda \ d\theta = \\ \\ \frac{1}{f(t)^2} \int_0^{2\pi}k^2v \ du \end{eqnarray*} \\
This is not the derivative of $L_{\mathcal{Q}_t}$ with changed sign as in the usual case, but this won't be a problem. Once we have \\
\[ \int_0^{2\pi}k^2v \ du = -\frac{\partial L_{\mathcal{Q}_t}}{\partial t} + \frac{\dot{f}(t)}{f(t)}\int_0^{2\pi}v \ du = -\frac{\partial L_{\mathcal{Q}_t}}{\partial t} + \frac{\dot{f}(t)}{f(t)}L_{\mathcal{Q}_t} \] \\
we can use uniform bounds (remember $f$ is bounded by below by 1) for $f$, $\dot{f}$ and $L_{\mathcal{Q}_t}$ to write \\
\[ \frac{1}{f(t)^2}\int_0^{2\pi}k^2v \ du \leq -C_0\frac{\partial L_{\mathcal{Q}_t}}{\partial t} + C_1 \] \\
for constants $C_0,C_1>0$ which don't depend on $t$. This yields \\
\[ \int_0^{2\pi}(a_0k)^2-\left((a_0k)'\right)^2d\theta \leq -2k^*(t)C_0\frac{\partial L_{\mathcal{Q}_t}}{\partial t} + 2k^*(t)C_1 + 2\pi k^*(t)C_2, \] \\
where $C_2 = \displaystyle\max_{\theta\in [0,2\pi]}\left|(a_0')^2+2a_0a_0'' + a_0^2\right|$. Now, integrating and using, again, a bound for $L_{\mathcal{Q}_t}$ yields the desired estimate for finite time, which is what we want. \\
\end{proof}

\begin{coro} In the same conditions of the proposition the function $t \mapsto \displaystyle\int_0^{2\pi}a_0(a_0+a_0'')\log(k) \ d\theta$ is bounded in $[0,T)$. \\
\end{coro}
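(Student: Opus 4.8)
The plan is to reduce the statement directly to the Proposition by a splitting of the logarithm. Writing $\log(k(\theta,t)) = \log(k(\theta,t)/f(t)) + \log(f(t))$ and integrating, one obtains
\[ \int_0^{2\pi}a_0(a_0+a_0'')\log(k)\ d\theta = \int_0^{2\pi}a_0(a_0+a_0'')\log\left(\frac{k}{f}\right)\ d\theta + \log(f(t))\int_0^{2\pi}a_0(a_0+a_0'')\ d\theta. \]
The first summand on the right-hand side is precisely the function treated in the Proposition, hence it is bounded on $[0,T)$ under the hypothesis that $k^*(t)$ is bounded.

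For the second summand I would observe that $\int_0^{2\pi}a_0(a_0+a_0'')\ d\theta$ is a fixed real number independent of $t$ (integration by parts turns it into $\int_0^{2\pi}(a_0^2 - (a_0')^2)\ d\theta$, which only involves the fixed profile $a_0$). It then remains to bound $\log(f(t))$ on $[0,T)$. Since $f$ is smooth, positive, nondecreasing with $f(0)=1$, and since by the area estimate of this section we have $T \leq T_1 \leq A(0)/A(\mathcal{P}_0) < \infty$, we get $0 = \log f(0) \leq \log f(t) \leq \log f(T_1) < \infty$ for all $t \in [0,T)$. Multiplying by the constant above and adding the bounded first summand yields the claim.

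I do not expect a genuine obstacle here: the only point deserving a word of care is that the boundedness of $\log f$ relies on working over a finite time interval, which is exactly what the estimate $T \leq T_1$ (derived earlier in this section) provides. Everything else is the elementary identity $\log k = \log(k/f) + \log f$ combined with the Proposition.
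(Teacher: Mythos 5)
Your proof is correct and follows essentially the same route as the paper: the decomposition $\log k = \log(k/f) + \log f$, the Proposition for the first summand, and the boundedness of $\log f(t)$ on the finite interval $[0,T)\subseteq[0,T_1]$ for the second. The paper additionally records an explicit lower bound via $k_{\mathrm{MIN}}(0)$, but your appeal to two-sided boundedness from the Proposition covers the same ground.
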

\begin{proof} Let $M$ be an upper bound given for $\displaystyle\int_0^{2\pi}a_0(a_0+a_0'')\log\left(\frac{k}{f}\right)\ d\theta$. Then,  \\ 

\[ \int_0^{2\pi}a_0(a_0+a_0'')\log(k) \ d\theta \leq M + \log(f(t))\int_0^{2\pi}a_0(a_0+a_0'') \ d\theta, \] \\
and then we have the desired since $\log(f(t))$ is bounded for finite time. Notice also that we have an obvious lower bound (not necessarily positive): \\
\[ \int_0^{2\pi}a_0(a_0+a_0'')\log\left(k_{\mathrm{MIN}}(0)\right) \ d\theta \leq \int_ 0^{2\pi}a_0(a_0+a_0'')\log(k) \ d\theta \]
for every $t\in [0,T)$.
\end{proof}

\begin{prop} There exists a constant $N \geq 0$ such that \\ 
\[ \int_0^{2\pi}\left((a_0(\theta)k(\theta,t))'\right)^2d\theta \leq \int_0^{2\pi}\left(a_0(\theta)k(\theta,t)\right)^2d\theta + N \]
for every $T \in [0,T)$.
\end{prop}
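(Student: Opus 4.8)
The plan is to carry everything through the single function $w(\theta,t):=a_0(\theta)\,k(\theta,t)$. Since $a_0$ is independent of $t$ we have $(a_0k)'=w'$ (with $'=\partial_\theta$), so the integrand appearing in the previous proposition is $\int_0^{2\pi}\!\big(w^2-(w')^2\big)\,d\theta$ and the asserted inequality is equivalent to
\[ g(t):=\int_0^{2\pi}\!\big(w(\theta,t)^2-w'(\theta,t)^2\big)\,d\theta\ \geq\ -N\qquad\text{for all }t\in[0,T). \]
In other words, one must bound from below the time-derivative of the (almost) entropy functional that the Corollary shows is bounded; but I will not use that boundedness — a differential inequality for $g$ alone will do the job.

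The computational core is to rewrite the homothetic evolution equation $\dot k=\frac{a_0}{b}k^2k''+\frac{2a_0'}{b}k^2k'+k^3+\frac{\dot f}{f}k$, where $b:=a_0+a_0''>0$, in terms of $w$. Expanding $(a_0k)''=a_0''k+2a_0'k'+a_0k''$ and using $w=a_0k$ together with $a_0+a_0''=b$, a short manipulation (in which the $w'$-terms and the stray cubic terms cancel) gives the compact form
\[ \dot w\ =\ \frac{w^2}{a_0\,b}\,(w+w'')\ +\ \frac{\dot f}{f}\,w. \]
Differentiating $g$ under the integral sign and integrating by parts (legitimate since, by parabolic regularity, $k$ is smooth for $t>0$ and $g$ is continuous up to $t=0$) yields
\[ g'(t)\ =\ 2\!\int_0^{2\pi}\!(w+w'')\,\dot w\,d\theta\ =\ 2\!\int_0^{2\pi}\!\frac{w^2(w+w'')^2}{a_0\,b}\,d\theta\ +\ 2\,\frac{\dot f}{f}\!\int_0^{2\pi}\!(w+w'')\,w\,d\theta. \]
Since $\int_0^{2\pi}(w+w'')w\,d\theta=\int_0^{2\pi}(w^2-(w')^2)\,d\theta=g(t)$, and since $a_0>0$ and $b>0$ on $S^1$ force the first integral to be nonnegative, this reduces to
\[ g'(t)\ \geq\ 2\,\frac{\dot f(t)}{f(t)}\,g(t),\qquad t\in[0,T). \]

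It remains to extract a lower bound for $g$ from this on the finite interval $[0,T)\subseteq[0,T_1]$. Because $f$ is smooth, positive and nondecreasing with $f(0)=1$, the quantity $\dot f/f$ is continuous and nonnegative on $[0,T_1]$, hence $0\le\dot f/f\le M_1$ there for some $M_1\geq0$. On any subinterval where $g<0$ we then have $\frac{\dot f}{f}\,g\geq M_1g$, so $g'\geq 2M_1g$ there, i.e.\ $t\mapsto e^{-2M_1t}g(t)$ is nondecreasing on such a subinterval. Now fix $t_0\in[0,T)$ with $g(t_0)<0$ and let $t_1$ be the infimum of those $\tau$ for which $g<0$ on all of $(\tau,t_0]$; by continuity $t_1=0$ or $g(t_1)=0$, and the monotonicity gives $e^{-2M_1t_0}g(t_0)\geq e^{-2M_1t_1}g(t_1)$. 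If $g(t_1)=0$ this forces $g(t_0)\geq0$, a contradiction, so $t_1=0$ and therefore $g(t_0)\geq e^{2M_1t_0}g(0)\geq e^{2M_1T_1}\min\{g(0),0\}$. Hence the proposition holds with $N:=-e^{2M_1T_1}\min\{g(0),0\}\geq0$, where $g(0)=\int_0^{2\pi}\big((a_0\varphi)^2-((a_0\varphi)')^2\big)\,d\theta$. The main point of difficulty is really just the algebraic collapse leading to the compact form of $\dot w$; once that is in hand the inequality for $g'$ and the elementary comparison argument (with the mild care that $g$ may change sign, handled by tracking its last zero before $t_0$) complete the proof.
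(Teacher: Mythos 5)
Your proof is correct, and it takes a genuinely different and leaner route than the paper's. The paper perturbs the functional: it works with $g(t)=\int_0^{2\pi}\bigl(w^2-(w')^2\bigr)\,d\theta+2\frac{\dot f}{f}\int_0^{2\pi}a_0(a_0+a_0'')\log k\,d\theta$, where $w=a_0k$, the perturbation being chosen so that $dg/dt$ becomes a manifestly nonnegative term plus $2(\log f)''\int_0^{2\pi} a_0(a_0+a_0'')\log k\,d\theta$; bounding this from below, and later discarding the perturbation, requires bounds on $(\log f)''$ together with the preceding Corollary, hence ultimately the median-curvature bound and the standing hypothesis that the area stays away from zero. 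You instead keep the unperturbed $\int_0^{2\pi}\bigl(w^2-(w')^2\bigr)\,d\theta$ and observe that, once the evolution equation is rewritten compactly as $\dot w=\frac{w^2}{a_0(a_0+a_0'')}(w+w'')+\frac{\dot f}{f}\,w$ (this algebra checks out), the non-sign-definite term in its derivative is exactly $2\frac{\dot f}{f}$ times the functional itself, giving the self-contained Gronwall inequality $g'\ge 2\frac{\dot f}{f}\,g$; the two computations are mutually consistent, since substituting $w+w''=\frac{a_0(a_0+a_0'')}{w^2}\bigl(\dot w-\frac{\dot f}{f}w\bigr)$ into your identity recovers the paper's formula for $dg/dt$. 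What your version buys is that it needs only $0\le\dot f/f\le M_1$ on the finite interval $[0,T_1]$ (no second derivative of $\log f$) and is logically independent of the Corollary and of the area hypothesis, so the Proposition comes out unconditional; the endgame could even be shortened by noting that $g'\ge 2\frac{\dot f}{f}g$ says precisely that $t\mapsto g(t)/f(t)^2$ is nondecreasing, whence $g(t)\ge f(t)^2\min\{g(0),0\}\ge f(T_1)^2\min\{g(0),0\}$ with no sign-tracking. The only step deserving an extra word of justification is differentiating under the integral sign and integrating by parts, which you already flag via parabolic regularity.
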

\begin{proof} We will first consider the function $g:[0,T) \rightarrow \mathbb{R}$ given by : \\
\[ g(t) = \int_0^{2\pi}\left(a_0(\theta)k(\theta,t)\right)^2 - \left((a_0(\theta)k(\theta,t))'\right)^2 + 2a_0(\theta)\left(a_0(\theta)+a_0''(\theta)\right)\log(k(\theta,t))\frac{\dot{f}(t)}{f(t)} \ d\theta \] \\
After some calculations we have that its derivative is given by \\
\[ \frac{dg}{dt} = \int_0^{2\pi}\frac{2a_0(a_0+a_0'')}{k^2}\left(\frac{\partial k}{\partial t}\right)^2 + 2a_0(a_0+a_0'')\log(k)\frac{\partial^2\log(f)}{\partial t^2} \ d\theta \] \\
and then, \\
\begin{eqnarray*} \frac{dg}{dt} \geq 2\int_0^{2\pi}a_0(a_0+a_0'')\log(k)\frac{\partial^2\log(f)}{\partial t}^2\ d\theta \geq \\ \\ \geq  2\frac{\partial^2\log(f)}{\partial t^2}\int_0^{2\pi}a_0(a_0+a_0'')\log\left(k_{\mathrm{MIN}}(0)\right) \ d\theta  \geq C \end{eqnarray*}
for some constant $C$ that we cannot take positive because we may have $k_{\mathrm{MIN}}(0) < 1$. In particular we can assume $C <0$. Notice that we used, again, uniform bounds for $f$ and its derivatives in finite time. By integration we have, for each $t \in [0,T)$ \\
\[ g(t) \geq g(0) + Ct \geq g(0) + CT, \] \\ 
since $C < 0$. Then, substituting $g$ by its expression and rearranging the terms we have for every $t \in [0,T)$,  \\
\begin{eqnarray*} \int_0^{2\pi}\left((a_0k)'\right)^2d\theta \leq -g(0) - CT + \int_0^{2\pi}(a_0k)^2d\theta + 2\frac{\dot{f}(t)}{f(t)}\int_0^{2\pi}a_0(a_0+a_0'')\log k \ d\theta \leq \\ \\ \leq \int_0^{2\pi}(a_0k)^2d\theta -g(0) - CT + M, \end{eqnarray*}
where $M$ is a time-independent constant given by bounds on $f$ and $f'$ and by the previous corollary. Taking $N$ to be any positive number greater than $-g(0) - CT + M$ yields the desired. \\
\end{proof}
\begin{lemma} If $\displaystyle\int_0^{2\pi}a_0(a_0+a_0'')\log\left(k\right) \ d\theta$ is bounded in $[0,T)$, then for any $\delta > 0$ there exists a constant $C$ such that if $k(\theta,t) > C$ in an interval $J$ (varying the parameter $\theta$) then we have necessarily $|J| \leq \delta$. \\
\end{lemma}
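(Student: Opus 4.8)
The plan is to play the hypothesised upper bound on $\int_0^{2\pi}a_0(a_0+a_0'')\log(k)\,d\theta$ against the pointwise lower bound $k(\theta,t)\ge k_{\mathrm{MIN}}(0)$ coming from Theorem 2.1. Write $w(\theta):=a_0(\theta)\big(a_0(\theta)+a_0''(\theta)\big)$; since $a_0>0$ and $a_0+a_0''>0$ on all of $S^1$, the function $w$ is continuous, positive and $\pi$-periodic, hence there are constants $0<w_{\mathrm{m}}\le w_{\mathrm{M}}<\infty$ with $w_{\mathrm{m}}\le w\le w_{\mathrm{M}}$ on $[0,2\pi]$. Let $B$ be a bound with $\int_0^{2\pi}w(\theta)\log k(\theta,t)\,d\theta\le B$ for all $t\in[0,T)$ (enlarging it if necessary, we may assume $B\ge 0$), and put $k_0:=k_{\mathrm{MIN}}(0)>0$, so that $\log k(\theta,t)\ge\log k_0$ for every $(\theta,t)$.

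First I would record a lower bound, uniform in $t$, for the part of the integral that lives away from the interval $J$. Using $\log k\ge\log k_0$ and $w>0$, for any measurable $E\subseteq[0,2\pi]$ one has
\[
\int_E w\,\log k\;d\theta\ \ge\ \int_E w\,\log k_0\;d\theta\ \ge\ 2\pi\,w_{\mathrm{M}}\,\min\{\log k_0,0\}\ .
\]
The right-hand side is a constant $D_0\le 0$ independent of $t$ and $E$; to get the second inequality one handles $\log k_0\ge 0$ (the integral is then already $\ge 0\ge D_0$) and $\log k_0<0$ (then $w\log k_0\ge w_{\mathrm{M}}\log k_0$ and $|E|\le 2\pi$) separately. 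This sign bookkeeping for $\log k_0$ --- which may well be negative, since nothing forces $k_{\mathrm{MIN}}(0)\ge 1$ --- is the only delicate point; everything else is routine.

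Finally, suppose that at some time $t\in[0,T)$ one has $k(\theta,t)>C$ on an interval $J\subseteq[0,2\pi]$, where $C\ge e$ is to be fixed. Then $\log k>\log C>0$ throughout $J$, and splitting $[0,2\pi]=J\cup\big([0,2\pi]\setminus J\big)$ gives
\[
B\ \ge\ \int_0^{2\pi}w\,\log k\;d\theta
\ =\ \int_{J}w\,\log k\;d\theta+\int_{[0,2\pi]\setminus J}w\,\log k\;d\theta
\ \ge\ w_{\mathrm{m}}\,|J|\,\log C\ +\ D_0\,,
\]
so that $|J|\le (B-D_0)/(w_{\mathrm{m}}\log C)$, with $B-D_0\ge 0$ a constant independent of $t$. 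Hence, given $\delta>0$, it suffices to take $C:=\max\{\,e,\ \exp\!\big((B-D_0)/(w_{\mathrm{m}}\delta)\big)\,\}$ to force $|J|\le\delta$ whenever $k(\cdot,t)>C$ on an interval $J$, which is exactly the claim.
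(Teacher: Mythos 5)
Your argument is correct, and it is essentially the standard one: the paper itself only says the proof is "identical to the usual case" (deferring to \cite{mink}), and the usual case is exactly this Gage--Hamilton-style argument of playing the bounded weighted integral of $\log k$ against the pointwise lower bound $k\ge k_{\mathrm{MIN}}(0)$ and the positivity of the weight $a_0(a_0+a_0'')$. Your careful handling of the possibly negative sign of $\log k_{\mathrm{MIN}}(0)$ is the right (and only) delicate point, so nothing is missing.
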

\begin{proof} The proof is identical to the proof in the usual case. \\
\end{proof}

\begin{prop} If $\displaystyle\int_0^{2\pi}a_0(a_0+a_0'')\log\left(k\right) \ d\theta$ is bounded in $[0,T)$, then $k(\theta,t)$ has an upper bound in $S^1\times [0,T)$. \\
\end{prop}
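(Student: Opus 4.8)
The plan is to derive a pointwise upper bound for $k$ from the two integral bounds now available: the boundedness of $\int_0^{2\pi} a_0(a_0+a_0'')\log k\, d\theta$ (the hypothesis) together with the inequality $\int_0^{2\pi}((a_0k)')^2\, d\theta \leq \int_0^{2\pi}(a_0k)^2\, d\theta + N$ from the previous proposition. The key tool is a Sobolev-type or Gagliardo–Nirenberg-type estimate on $S^1$: a bound on the $L^2$ norm of the derivative of $a_0 k$ controls its oscillation, so if $a_0 k$ were very large somewhere it would have to be large on an interval whose length is bounded below. This is exactly the content of the preceding Lemma, which says that for any $\delta > 0$ there is a constant $C_\delta$ so that $k > C_\delta$ can only happen on intervals of length at most $\delta$. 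Combined with the $L^2$-derivative control, a large value of $k$ at a point forces $\int (a_0k)^2$ to be large, which in turn forces $\int ((a_0k)')^2$ to be large (only up to the additive constant $N$), and iterating/bootstrapping this yields the contradiction.

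Concretely, I would argue as follows. Suppose for contradiction that $k$ is not bounded on $S^1 \times [0,T)$; then there are times $t_j$ and points $\theta_j$ with $k(\theta_j,t_j) \to \infty$. Fix $\delta$ small (to be chosen) and apply the previous Lemma to get $C_\delta$ such that the set where $k(\cdot,t) > C_\delta$ has measure at most $\delta$ in $\theta$. Choose $j$ large enough that $\max_\theta a_0(\theta)k(\theta,t_j) =: m_j$ is enormous. Since $\int_0^{2\pi}((a_0k)')^2 d\theta \leq \int_0^{2\pi}(a_0k)^2 d\theta + N$, write $M_j := \sup_\theta (a_0k)(\theta, t_j)$; using $|(a_0k)(\theta) - (a_0k)(\theta')| = |\int ((a_0k)')|$ and Cauchy–Schwarz on $S^1$, one gets that $(a_0k)$ stays within a controlled fraction of $M_j$ on an interval whose length is comparable to $M_j^{2}/(\text{something})$; but at the same time that interval can have length at most $\delta$ by the Lemma once $M_j/\max a_0 > C_\delta$. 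Balancing these two facts, together with the lower bound on $\int a_0(a_0+a_0'')\log k$ (which prevents $k$ from being too small elsewhere and so pins down $\int(a_0k)^2$ in terms of $M_j$ and the small exceptional set), produces a quantitative inequality of the form $M_j^{\,p} \leq \text{const} \cdot M_j^{\,q} + \text{const}$ with $p > q$, impossible for $M_j$ large. Choosing $\delta$ small at the outset makes the constants work out.

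I expect the main obstacle to be the bookkeeping that turns the $L^2$-derivative bound into a usable pointwise oscillation estimate on the circle and then closes the loop against the Lemma on interval lengths. The usual case in \cite{mink} presumably does exactly this, so the honest statement is that the argument is the same modulo replacing the quantities by their $f$-weighted analogues; the only genuinely new point is that all the auxiliary constants ($N$, the bound on $\int a_0(a_0+a_0'')\log k$, the lower bound $\log k_{\mathrm{MIN}}(0)$) are available uniformly on $[0,T) \subseteq [0,T_1]$ because $f$ and its derivatives are bounded on this finite interval. So in the write-up I would state that the proof follows the usual case verbatim after these substitutions, perhaps indicating the one Sobolev-embedding step $\|a_0k\|_\infty \le C(\|a_0k\|_2 + \|(a_0k)'\|_2) \le C'(\|a_0k\|_2 + \sqrt N)$ and then the contradiction-by-interval-length argument, and leave the routine calculation to the reader, exactly in the spirit of the other "identical to the usual case" proofs in this section.
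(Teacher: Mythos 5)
Your argument is the standard Gage--Hamilton-style reconstruction (use $\int((a_0k)')^2 \le \int(a_0k)^2+N$ to show $a_0k$ stays above half its maximum $M$ on an interval of length comparable to $M^2/\|(a_0k)'\|_2^2$, force that length below $\delta$ via the interval-length Lemma, and close with $\|a_0k\|_2^2\le 2\pi M^2$ after choosing $\delta$ small), which is precisely the ``usual case'' proof that the paper defers to with the single line ``identical to the proof in the usual case,'' so the approach matches. One caveat worth noting: the step that actually closes the loop is the $\delta$-gain from the interval-length Lemma, not the plain Sobolev embedding $\|a_0k\|_\infty \le C\left(\|a_0k\|_2+\|(a_0k)'\|_2\right)$ you suggest writing at the end --- that embedding by itself cannot beat the trivial reverse bound $\|a_0k\|_2\le\sqrt{2\pi}\,\|a_0k\|_\infty$ --- but your main paragraph already contains the correct mechanism, and the lower bound on $\int a_0(a_0+a_0'')\log k$ you invoke is not actually needed.
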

\begin{proof} The proof is, again, identical to the proof in the usual case. \\
\end{proof}
Combining these lemmas and propositions yields \\
\begin{teo}If the area $A(t)$ enclosed by the curves associated to the $t$-minkowskian curvature function $k$ admits a strictly positive lower bound on $[0,T)$, then $k$ is uniformly bounded in $S^1\times [0,T)$. \\
 \end{teo}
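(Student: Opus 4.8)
The strategy is to concatenate the lemmas, the corollary, and the propositions established above, adding only the (easy) uniform lower bound for $k$. For the lower bound, recall that in the homothetic setting $a(\theta,t)+a''(\theta,t)=f(t)\bigl(a_0(\theta)+a_0''(\theta)\bigr)$, so that $\partial_t\log(a+a'')=\dot f(t)/f(t)\ge 0$ since $f$ is nondecreasing; hence the maximum-principle argument used in the proof of Theorem 2.1 applies verbatim and gives $k(\theta,t)\ge k_{\mathrm{MIN}}(0)>0$ on all of $S^1\times[0,T)$. So it remains only to produce a uniform \emph{upper} bound for $k$.

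For the upper bound I would run the following chain of implications, all of them already available. The hypothesis that $A(t)$ stays bounded away from zero on $[0,T)$ is precisely the input of the Lemma on the median curvature, which --- using Lemma \ref{qlenght} for the uniform bound on $L_{\mathcal{Q}_t}$ together with the cancellation of the factor $f(t)^2$ inside $C(t)$ --- produces a uniform upper bound for $k^*(t)$ on $[0,T)$. With $k^*(t)$ bounded, the first Proposition of this section bounds $\int_0^{2\pi}(a_0+a_0'')a_0\log\bigl(k/f\bigr)\,d\theta$ on $[0,T)$, and the Corollary then transfers this to a bound for $\int_0^{2\pi}a_0(a_0+a_0'')\log(k)\,d\theta$, absorbing the term $\log f(t)\int_0^{2\pi}a_0(a_0+a_0'')\,d\theta$, which is controlled because $[0,T)$ is a finite interval. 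Finally, the last Proposition converts boundedness of $\int_0^{2\pi}a_0(a_0+a_0'')\log(k)\,d\theta$ into a genuine pointwise upper bound for $k$ on $S^1\times[0,T)$. Combined with the lower bound above, this is the uniform bound claimed.

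I do not expect a genuine obstacle here, since each individual step has already been isolated and proved; the only point demanding care is to verify that every ``finite-time'' estimate invoked along the chain --- on $f$, $\dot f$ and $\ddot f$, on $\log f$, on $L_{\mathcal{Q}_t}$, and on the constant $C(t)$ --- is genuinely uniform over the whole half-open interval $[0,T)$. This causes no difficulty because, by the area estimate of this section, $T\le T_1\le A(0)/A(\mathcal{P}_0)<\infty$, so $[0,T)$ lies in a compact interval on which $f$ and its derivatives are bounded, and Lemma \ref{qlenght} (applied with an upper bound for $\dot f/f$ on that compact interval) supplies the bound on $L_{\mathcal{Q}_t}$. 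Once these uniform constants are fixed the chain closes and the theorem follows.
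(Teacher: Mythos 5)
Your proposal is correct and follows essentially the same route as the paper, which proves this theorem simply by chaining Lemma 3.1 $\Rightarrow$ Proposition 3.1 $\Rightarrow$ Corollary 3.1 $\Rightarrow$ Proposition 3.3 exactly as you describe (the paper literally states only ``combining these lemmas and propositions yields''). Your explicit verification that all the finite-time bounds on $f$, $\dot f$, $\log f$, $L_{\mathcal{Q}_t}$ and $C(t)$ are uniform on $[0,T)\subseteq[0,T_1]$, and your remark on the lower bound $k\ge k_{\mathrm{MIN}}(0)$, are appropriate additions but not departures from the paper's argument.
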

Let us prove now that the first spatial derivative of $k$ is also bounded provided $k$ is bounded. \\
\begin{prop} If $k$ is bounded in $S^1\times [0,T)$, then $k'$ is also bounded in $S^1\times [0,T)$. \\
\end{prop}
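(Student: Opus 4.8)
The plan is to bound $k'$ by a parabolic maximum principle applied to a carefully \emph{weighted} square of $k'$. Since the hypothesis gives an upper bound for $k$ and we already know from the existence argument that $k \geq k_{\mathrm{MIN}}(0) > 0$, the coefficient $P := a_0/(a_0+a_0'')$ is smooth, $\pi$-periodic, and bounded above and away from zero, and $f$, $\dot f$, $\ddot f$ are bounded on $[0,T)\subseteq [0,T_1]$; moreover, by the assumed regularity together with interior parabolic estimates, $k$ is as smooth as needed for $t>0$ and $k'$ is continuous up to $t=0$, which is all that the maximum principle requires. Write also $Q := 2a_0'/(a_0+a_0'')$, so that the evolution equation in the homothetic case reads $k_t = Pk^2k'' + Qk^2k' + k^3 + (\dot f/f)k$.

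First I would differentiate this equation with respect to $\theta$, obtaining $k'_t = Pk^2 k''' + (\text{terms linear in } k'' \text{ and polynomial in } k,k' \text{ with smooth bounded coefficients})$. The naive choice $w=(k')^2$ then satisfies
\[ w_t = Pk^2 w'' - 2Pk^2(k'')^2 + 4Qk(k')^3 + \big(\text{terms quadratic in } k' \text{ together with terms containing } k''\big). \]
At a spatial maximum of $w(\cdot,t)$ one has $w'=2k'k''=0$, so, away from the trivial case $w\equiv 0$, $k''=0$ there and every $k''$-term drops out; also $w''\le 0$ and $Pk^2>0$. However the cubic term $4Qk(k')^3$ — which is \emph{absent} in the usual case treated in \cite{mink}, where $Q\equiv 0$ — survives and only admits the bound $Cw^{3/2}$, which is too weak to prevent finite-time blow-up of $\max_{S^1}w$. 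Overcoming this is the only real obstacle.

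To remove the obstruction I would work instead with the weighted quantity $w := a_0^4(k')^2$ (equivalently $(a_0^2 k')^2$; note that $a^2k' = f^2 a_0^2 k'$ is precisely the coefficient of $q$ in the formula for $\partial F/\partial\theta$). Writing $w=\psi v$ with $\psi=a_0^4$ and $v=(k')^2$, the relation $w'=\psi'v+\psi v'=0$ at a spatial maximum forces $k''=-k'\psi'/(2\psi)$ there; substituting this into $\psi\,v_t$ one finds that the two cubic-in-$k'$ contributions (the one above and the one produced by the term $4Pk(k')^2k''$) combine into $2k(k')^3\big(2\psi Q - P\psi'\big)$, and the weight $\psi=a_0^4$ is exactly the one for which $\psi'/\psi = 2Q/P = 4a_0'/a_0$, so this quantity vanishes identically. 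Using $w''\le 0$ and $Pk^2>0$ at the maximum, and estimating all remaining terms — which are quadratic in $k'$, hence bounded by $Cv\le C'w$ since $a_0$ is bounded away from zero — by means of the bounds on $k$, on $\dot f/f$, and on the smooth $\pi$-periodic coefficients, one is left with
\[ w_t \leq C w \qquad \text{at every spatial maximum of } w(\cdot,t), \]
with $C$ independent of $t\in[0,T)$.

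The conclusion follows from the maximum principle on $S^1$: the function $t\mapsto \max_{S^1}w(\cdot,t)$ is locally Lipschitz and satisfies $\frac{d}{dt}\max_{S^1}w \leq C\max_{S^1}w$ in the barrier sense, so $\max_{S^1}w(\cdot,t) \leq e^{CT}\max_{S^1}w(\cdot,0) < \infty$ for all $t\in[0,T)$ (the passage to $t=0$ using the continuity of $k'$ up to the initial time). Since $a_0^4$ is bounded below by a positive constant, $k'$ is uniformly bounded on $S^1\times[0,T)$, as claimed. Apart from the choice of weight that kills the cubic term, the argument is the routine maximum-principle computation of the usual case.
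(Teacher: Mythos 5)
Your proof is correct and rests on the same key idea as the paper's: the weight $a_0^2$ attached to $k'$ (you use $w=(a_0^2k')^2$, the paper uses $u=k'e^{ct+\log(a_0^2)}$), chosen precisely so that the troublesome first-order term coming from $\frac{2a_0'}{a_0+a_0''}k^2k'$ is absorbed and the maximum principle closes. The only difference is in execution: the paper keeps the quantity linear, writes its evolution as a divergence-form linear parabolic equation and chooses $c$ to make the zeroth-order coefficient nonpositive, whereas you square it and argue pointwise at the spatial maximum to get a Gronwall inequality; both give the same finite-time bound.
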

\begin{proof} As in the usual case, consider the function $u:S^1\times [0,T) \rightarrow \mathbb{R}$ given by $u = k'e^{ct+h(\theta)}$, where $h(\theta) = \log\left(a_0(\theta)^2\right)$ and $c$ is to be choosen later. After some calculations we have that $u$ is a solution of the second-order linear parabolic equation: \\
\[ \frac{\partial u}{\partial t} = \left(c + 3k^2 + \frac{\dot{f}(t)}{f(t)}\right)u - k^2\frac{2a_0'}{a_0+a_0''}\frac{\partial u}{\partial\theta} + \frac{\partial}{\partial\theta}\left(k^2\frac{a_0}{a_0+a_0''}\frac{\partial u}{\partial\theta}\right) \] \\
Now, using bounds for $k$, $f$ and $\dot{f}$ one can choose $c$ such that the coeficient of $u$ is nonpositive. Then, using the maximum principle we have that $u$ is bounded in $S^1\times [0,T)$, and then $k'$ is also bounded since we are working with finite time. \\
\end{proof}
To show that the higher derivatives of $k$ are also bounded in $S^1\times [0,T)$ we repeat the proof of the usual case adding some terms given by the new term on the evolution equation. The bounds for $f$ and its derivatives will provide the necessary bounds for this new terms. Thus we have, as in the usual case: \\
\begin{teo}The solution to the generalized minkowskian curvature flow associated to the family of norms given by $a(\theta,t)=f(t)a_0(\theta)$ where $a_0$ is a positive, $\pi$-periodic and $C^{\infty}$ function, and $f$ is a positive, nondecreasing, $C^{\infty}$ function with $f(0) = 1$ continues until the area enclosed by the curves converges to zero. \\
\end{teo}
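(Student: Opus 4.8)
The plan is to run the standard continuation argument: the short-time existence guaranteed by Theorem 2.1, combined with a priori estimates that remain valid as long as the enclosed area stays away from zero, forces the maximal existence time of the solution to coincide with the time $T_1$ at which $A(t)$ reaches $0$. Recall that $T_1$ is finite -- from $A(t) = A(0) - 2A(\mathcal P_0)\int_0^t f(s)^2\,ds$ one gets $T_1 \le A(0)/A(\mathcal P_0)$, that $A$ is continuous and strictly positive on $[0,T_1)$, and hence that on every $[0,T]$ with $T < T_1$ the area is bounded below by $A(T) > 0$. The only ingredient not already present in the usual flow is that, on a fixed finite interval $[0,T]\subseteq[0,T_1]$, the function $f$ and all of its derivatives are bounded above and below away from zero by compactness and smoothness; in particular $\dot f/f$, $\partial_t^2\log f$, and so on are uniformly bounded there. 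All constants below are uniform on the relevant interval.

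So let $[0,T^*)$ be the maximal interval of existence of a solution $k$ of (2), and suppose for contradiction that $T^* < T_1$. Then $A(t)$ admits a strictly positive lower bound on $[0,T^*)$, so Theorem 3.1 yields a uniform bound for $k$ on $S^1\times[0,T^*)$. Proposition 3.4 then gives a uniform bound for $k'$, and the inductive scheme indicated after it -- carried out as in the usual flow, with the additional lower-order terms coming from $\dot f/f$ and its time derivatives absorbed by the finite-time bounds on $f$ -- yields uniform bounds for all spatial derivatives $\partial_\theta^j k$ on $S^1\times[0,T^*)$. Differentiating the evolution equation (2) and feeding in these spatial bounds then produces uniform bounds for every mixed space--time derivative of $k$ as well.

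With these estimates the family $\{k(\cdot,t)\}_{t\in[0,T^*)}$ is bounded in $C^\infty(S^1)$ and, by the parabolic bounds just obtained, $t\mapsto k(\cdot,t)$ is uniformly continuous into $C^\infty(S^1)$; hence it extends continuously to $t=T^*$, with limit $k(\cdot,T^*)\in C^\infty(S^1)$. This limit is strictly positive, since $k_{\mathrm{MIN}}(t)\ge k_{\mathrm{MIN}}(0)>0$ along the motion (as in the proof of Theorem 2.1), and it satisfies the compatibility conditions (1) for the norm $\mathcal P_{T^*}$, since these hold for every $t<T^*$ and pass to the limit. Thus $k(\cdot,T^*)$ is an admissible initial datum, and applying Theorem 2.1 with initial time $T^*$ extends the solution to $S^1\times[0,T^*+\varepsilon)$ for some $\varepsilon>0$, contradicting the maximality of $T^*$. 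Therefore $T^*\ge T_1$, and since $A(t)\to 0$ as $t\to T_1$, the flow continues until the enclosed area converges to zero.

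The only delicate point is the higher-order bootstrap: one must verify that every term introduced into the evolution equation and its differentiated versions by $\dot f/f$ (and $\partial_t^2\log f$, etc.) is genuinely of lower order, so that the maximum-principle and energy arguments of the usual flow go through once the uniform finite-time bounds on $f$ and its derivatives are inserted. Granting that bookkeeping, the Ascoli--Arzel\`a step and the reapplication of short-time existence involve no new ideas.
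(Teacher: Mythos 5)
Your proposal follows the same route as the paper: a priori bounds on $k$ and its derivatives via Theorem 3.1 and Proposition 3.4 (with the $\dot f/f$ terms absorbed by finite-time bounds on $f$), then Ascoli--Arzel\`a and a reapplication of the short-time existence theorem to continue past any $T^* < T_1$. You are in fact more explicit than the paper about the continuation step --- the maximality contradiction, the positivity of the limiting curvature via $k_{\mathrm{MIN}}(t)\ge k_{\mathrm{MIN}}(0)$, and the passage of the closing conditions (1) to the limit --- all of which the paper leaves implicit, so the argument is correct and essentially the paper's own.
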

Since the solution cannot continues after the area goes to zero we have an corollary concerning the blow up of a family of PDE's.\\ 
\begin{coro} Fix a function $g:S^1\rightarrow \mathbb{R}$ which is smooth and $\pi$-periodic, and a smooth and strictly positive function $u_0:S^1 \rightarrow\mathbb{R}$ such that \\
\[ \int_0^{2\pi}\frac{g(\theta)+g''(\theta)}{u_0(\theta)}\sin(\theta)d\theta = \int_0^{2\pi}\frac{g(\theta)+g''(\theta)}{u_0(\theta)}\cos(\theta)d\theta = 0. \] \\
Let $\mathcal{F}$ be the family of the positive, nondecreasing and smooth functions $f:[0,\infty) \rightarrow \mathbb{R}$ with $f(0) = 1$, and consider the associated family of PDE's: \\
\[ \left\{ \begin{array}{lll} u_t = \displaystyle\frac{g}{g+g''}u^2u_{\theta\theta}+\frac{2g'}{g+g''}u^2u_{\theta}+u^3+\frac{\dot{f}}{f}u \\ \\ u(0) = u_0 \\ \end{array}\right. \] \\
Then, we must have an uniform upper bound for the blow up time of the solutions to these PDE's that only depends on $g$ and $u_0$. In other words, there exists a time $T = T(u_0,g)$ such that picking any solution $u$ for some of these PDE's we must have $u$ or some of its derivatives blowing up for some $t \leq T$. The time $T$ is explicitly given by the ratio $T = A/2B$ where $A$ is the area of the curve whose minkowskian curvature with respect to the norm given by the curve $a:\theta \mapsto g(\theta)e_r + g'(\theta)e_\theta$ is $u_0$; and $B$ is the area of the curve $a$. \\
\end{coro}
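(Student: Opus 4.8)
The plan is to read the corollary as a direct geometric reinterpretation of the last theorem. First I would observe that the PDE in the family is precisely the evolution equation (2) from the theorem after the substitution $a(\theta,t) = f(t)g(\theta)$: indeed $a + a'' = f(t)(g+g'')$, so $a/(a+a'') = g/(g+g'')$ and $2a'/(a+a'') = 2g'/(g+g'')$, while $\partial_t\log(a+a'') = \dot f/f$. Hence a solution $u$ of one of the PDE's in the family, with $u(\cdot,0) = u_0$ satisfying the integral conditions displayed (which are exactly conditions (1) at $t=0$ for this choice of $a_0 = g$), is exactly the $t$-minkowskian curvature function of the generalized flow associated to the homothetic family $a(\theta,t) = f(t)g(\theta)$, starting from the closed strictly convex smooth curve whose $\mathcal{P}_0$-curvature is $u_0$. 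That starting curve exists and is unique up to translation because $u_0 > 0$ is $C^\infty$, $\pi$-periodic-compatible, and satisfies (1); its area is the quantity called $A$ in the statement. Likewise $B = A(\mathcal{P}_0) = \int_0^{2\pi}[p_0,p_0']\,d\theta$ with $p_0(\theta) = g(\theta)e_r + g'(\theta)e_\theta$.

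Next I would invoke the last theorem: for this homothetic family the generalized flow exists until the enclosed area $A(t)$ converges to zero. Equivalently, contrapositively, $k$ and all its derivatives remain bounded on any time interval on which $A(t)$ stays bounded away from zero, so the only way the solution $u$ of the PDE can fail to extend is for $A(t)\to 0$. Then I would feed in the explicit area decay already derived in Section 3: since $A(\mathcal{P}_t) = f(t)^2 A(\mathcal{P}_0)$ and $f$ is nondecreasing with $f(0)=1$, the area evolution $A'(t) = -2A(\mathcal{P}_t)$ gives
\[
A(t) = A(0) - 2\int_0^t f(s)^2 A(\mathcal{P}_0)\,ds \leq A(0) - 2tA(\mathcal{P}_0) = A - 2tB.
\]
Therefore $A(t)$ reaches zero at some time $T_1 \leq A/(2B)$, and this bound is independent of the particular $f \in \mathcal{F}$: it depends only on $A(0) = A$ (determined by $u_0$ and $g$) and $A(\mathcal{P}_0) = B$ (determined by $g$). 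Since the curvature flow cannot be continued once the area vanishes, the corresponding solution $u$ — being the curvature function of the flow — must cease to exist (i.e. $u$ or one of its derivatives blows up) at or before $T = A/(2B)$.

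I would close by assembling these observations into the statement: given any $f \in \mathcal{F}$, let $u$ be a solution of the associated PDE with initial data $u_0$; by the identification above $u$ is the $t$-minkowskian curvature function of the generalized flow for $a(\theta,t)=f(t)g(\theta)$; by the last theorem the flow persists exactly as long as $A(t) > 0$; by the inequality $A(t) \leq A - 2tB$ this forces a singular time $\leq A/(2B)$; hence $u$ or some derivative of $u$ blows up for some $t \leq T = A/(2B)$, a bound depending only on $u_0$ and $g$. The only genuinely delicate point is the very first one — making precise that a solution of the scalar PDE really does correspond to a flow of curves, i.e. that part \textbf{(a)} of the theorem lets one reconstruct from $u$ a bona fide family of convex curves with $u$ as their curvature, and that conversely any such $u$ arises this way so the non-continuation statement of the last theorem applies to it verbatim. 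Once that dictionary is in place the rest is just the area estimate, which is already in hand.
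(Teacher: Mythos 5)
Your proposal is correct and follows exactly the route the paper intends: the paper presents this corollary as an immediate consequence of the preceding theorem (the flow for the homothetic family $a(\theta,t)=f(t)g(\theta)$ persists until the area vanishes) combined with the area decay estimate $A(t)\leq A(0)-2tA(\mathcal{P}_0)$ already derived in Section 3, which is precisely the dictionary-plus-area-estimate argument you spell out. Your explicit identification of the PDE with the evolution equation and of the blow-up time with $A/2B$ matches the paper's (largely implicit) proof, so there is nothing to add.
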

Even though this might be obvious for someone who has familiarity with nonlinear parabolic PDE's, we think the interest here is that we arrived at this result using, essencially, geometric methods. \\

\end{document}